\theoremstyle{plain}
\newtheorem{thm}{Theorem}[section]
\newtheorem{lem}{Lemma}[section]
\theoremstyle{remark}
\theoremstyle{definition}
\newtheorem{defn}{Definition}[section]
\newtheorem{rem}{Remark}[section]
\newcommand{\ddbar}{\overline\partial}
\newcommand{\pr}{\partial}
\newcommand{\ol}{\overline}
\newcommand{\norm}[1]{\left\Vert#1\right\Vert}
\newcommand{\abs}[1]{\left\vert#1\right\vert}
\newcommand{\set}[1]{\left\{#1\right\}}
\newcommand{\To}{\rightarrow}
\title{Embedding theorems for  quantizable pseudo-K\"ahler manifolds}
\author{Andrea Galasso\footnote{\noindent{\bf Address:} National Center for Theoretical Sciences, Room 407, Chee-Chun Leung Cosmology Hall, National Taiwan University; {\bf current address:} Dipartimento di Matematica e Applicazioni, Universit\`a degli Studi di Milano-Bicocca, Via R.	Cozzi 55, 20125 Milano, Italy; {\bf ORCID iD:} 0000-0002-5792-1674; {\bf e-mail}: andrea.galasso@unimib.it; andrea.galasso.91@gmail.com}\,\, and Chin-Yu Hsiao\footnote{\noindent{\bf Address:} Institute of Mathematics, Academia Sinica, 6F, Astronomy-Mathematics Building, No.1, Sec.4, Roosevel: {\bf ORCHID iD:} 0000-0002-1781-0013; {\bf email}: chsiao@math.sinica.edu.tw; chinyu.hsiao@gmail.com}}
\date{}
\begin{document}
	\maketitle
	
	\begin{abstract}  Given a compact quantizable pseudo-K\"ahler manifold $(M,\omega)$ of constant signature, there exists a Hermitian line bundle $(L,h)$ over $M$ with curvature $-2\pi i\,\omega$. We shall show that the asymptotic expansion of the Bergman kernels for $L^{\otimes k}$-valued $(0,q)$-forms implies more or less immediately a number of analogues of well-known results, such as Kodaira embedding theorem and Tian's almost-isometry theorem. \end{abstract}
	
	\begin{center}
		\textit{In memory of Steve Zelditch}
	\end{center}
	
	\tableofcontents
	
	\bigskip
	\textbf{Keywords:} Embedding theorems, almost-isometry theorems
	
	\textbf{Mathematics Subject Classification:} 32Q40, 32A25
	
	\section{Introduction}\label{s-gue220919yyd}
	
	Given a compact symplectic manifold $(M,\omega)$ such that $[\omega] \in H^2(M,\mathbb{Z})$ we can define a Hermitian line bundle $(L,h^L)$ having a unique holomorphic connection whose curvature is $R^L=-2\pi i\,\omega$. One can always pick an almost complex structure $J$ compatible with $\omega$, in \cite{sz} under the assumption of $L$ being positive a space of almost holomorphic sections $H^0_J(M,L^{\otimes k})$ is defined and the authors prove symplectic analogues of standard results in complex geometry; particularly those involving the zero sets of sections and the corresponding maps they define into complex projective space. As far as the authors are aware, if the positivity assumption on $L$ is dropped no such results are known and in fact one can not expect the dimensions of $H^0_J(M,L^{\otimes k})$ to grow as in the positive case as $k$ goes to infinity in general. From the other hand, since the curvature of the line bundle $L$ in non-degenerate, suppose that it has constant signature $(n_-,n_+)$, one can try to replace the spaces $H^0_J(M,L^{\otimes k})$ with spaces of harmonic $L^{\otimes k}$-valued $(0,n_-)$-forms, this is suggested by results contained in \cite{hsma} and \cite{mm} concerning the asymptotic expansion of the Bergman kernel for $(0,q)$ forms for high tensor powers of $L$ (see also \cite{hsiao} and \cite{hsma2} for the Szeg\H{o} kernels for $(0,q)$ forms). The aim of this paper is to define embeddings of $(M,\omega)$ which are asymptotically symplectic; since the results in \cite{hsma} are presented in the holomorphic category we focus on the case when $J$ is an integrable structure. 
	We now recall some known results and state the main theorems.
	
	We refer to Section \ref{sec:pre} for notations and preliminaries. 
	Let $(L,h^L)\To M$ be a holomorphic line bundle, where $M$ is a compact complex manifold of complex dimension $n$ and $h^L$ is the Hermitian metric of $L$. Let $R^L$ be the curvature of $L$ induced by $h^L$. In this work, we assume that $R^L$ is non-degenerate of constant signature $(n_-,n_+)$ on $M$, $n_-+n_+=n$.
	\begin{rem}
		Suppose $M$ is connected. Notice that when $\omega$ is non-degenerate we can prove that $\omega$ has constant signature. For each point $m$ in $M$ consider the continuous map $m \mapsto \lambda_j(m)$ which is the $j$-th eugenvalue of $\omega_m$. By absurd suppose that there exists $x,\,y$ in $M$ and $j=1,\dots,n$ such that $\lambda_j(x)>0$ and $\lambda_j(y)<0$. Since $\lambda_j$ is continuous, by the intermediate value theorem there exists $z$ such that $\lambda_j(z)=0$, which contradicts the hypothesis.
	\end{rem}
	
	Fix a Hermitian metric $\langle\,\cdot\,|\,\cdot\,\rangle$ on the holomorphic tangent bundle $T^{1,0}M$. 
	The Hermitian metric $\langle\,\cdot\,|\,\cdot\,\rangle$ on $T^{1,0}M$ induces a Hermitian 
	metric on $TM\otimes\mathbb C$ and also on $$\bigoplus_{q\in\mathbb N\cup\set{0},0\leq q\leq n}T^{*0,q}M,$$ where $T^{*0,q}M$ is the bundle of $(0,q)$ forms. Consider the vector bundle $T^{*0,q}M\otimes L^{\otimes k}$ whose space of smooth sections is denoted by $\Omega^{0,q}(M,L^{\otimes k})$. Let $\{\overline e_j\}^n_{j=1}$ be a basis of $T^{0,1}M:=\overline{T^{1,0}M}$ on an open set $D$ of $M$. 
	For any strictly increasing multi-index $I=(i_1,\dots,\,i_q)$, $1\leq i_j\leq n$, we denote \[\overline{e}^I:=\overline{e}_{i_1}\wedge \cdots \wedge\overline{e}_{i_q}\,.\] 
	On $D$, we can write a $(0,q)$-form $f\in\Omega^{0,q}(M,L^{\otimes k})$ as follows
	\begin{equation} \label{eq:form}
		f_{|D}= \sum^{<}_{\lvert I\rvert= q} f_I(z)\,\overline{e}^{I}\,, 
	\end{equation}
	where $f_I\in\mathcal{C}^\infty(D,L^k)$, for each $I$ and $\sum^{<}$ means that the summation is performed only over strictly increasing multi-indices. In this paper all multi-indices will be supposed to be
	strictly increasing. The Hermitian metrices $\langle\,\cdot\,|\,\cdot\,\rangle$ and $h^L$ induce a new Hermitian metric $\langle\,\cdot\,|\,\cdot\,\rangle_{h^{L^{\otimes k}}}$ on $T^{*0,q}M\otimes L^{\otimes k}$, the corresponding norm is denoted by $|\cdot|_{h^{L^{\otimes k}}}$. We can define the $L^2$-inner product as follows:
	\[(s_1,s_2)_k=\int_M \langle\,s_1\,|\,s_2\,\rangle_{h^{L^{\otimes k}}}\, \mathrm{dV}_M\quad s_1,\,s_2\in \Omega^{0,q}(M,L^{\otimes k})\,,  \]
	and we denote by $\lVert \cdot \rVert_{k}$ the corresponding norm, where $\mathrm{dV}_M$ is the volume form on $M$ induced by $\langle\,\cdot\,|\,\cdot\,\rangle$.  
	Let $L^2_{0,q}(M,L^{\otimes k})$ be the completion of $\Omega^{0,q}(M,L^{\otimes k})$ with respect to $(\cdot\,,\cdot)_k$. Let $\square^{(q)}_k: \Omega^{0,q}(M,L^{\otimes k})\To\Omega^{0,q}(M,L^{\otimes k})$ be the Kodaira Laplacian. We denote by the same symbol the $L^2$ extension of $\square^{(q)}_k$: 
	\[\square^{(q)}_k: {\rm Dom\,}\square^{(q)}_k\subset L^2_{0,q}(M,L^{\otimes k})\To L^2_{0,q}(M,L^{\otimes k}),\] where 
	${\rm Dom\,}\square^{(q)}_k=\{u\in L^2_{0,q}(M,L^{\otimes k});\, \square^{(q)}_ku\in L^2_{0,q}(M,L^{\otimes k})\}$. The projection 
	\[ P^{(q)}_k\,:\, L^2_{0,q}(M,L^{\otimes k}) \rightarrow H^q(M,L^{\otimes k}):={\rm Ker\,}\square^{(q)}_k \]
	onto the kernel of $\square^{(q)}_k$ is called Bergman projector or Bergman projection. Since $\square^{(q)}_k$ is elliptic, its distributional kernel satisfies 
	\[P^{(q)}_k(\,\cdot\,,\,\cdot\,)\in\mathcal{C}^{\infty}(M\times M,\,L^k\otimes (T^{*0,q}M\boxtimes (T^{*0,q}M)^*)\otimes (L^k)^*) \]
	and along the diagonal we have a smooth section
	\[z \mapsto P^{(q)}_k(z,\,z)\in \mathrm{End}(T^{*0,q}_zM) \]
	called local density of states.
	
	In the standard situation, when the line bundle is positive, we consider $q=0$ and we define an embedding by setting the holomorphic map
	\[\Phi_k\,:\, M \rightarrow \mathbb{CP}^{d_k}\,,\quad  z \mapsto [S_0(z):\,\cdots \,:\,S_{d_k}(z)]\]
	where $[S_0(z):\,\cdots \,:\,S_{d_k}(z)]$ denotes the line through $(S_0(z),\cdots,\,S_{d_k}(z))$ as defined in a local holomorphic frame and where $S_0,\,\dots,\,S_{d_k}$, is an orthonormal basis of the vector space $H^0(M,L^{\otimes k})$. Since all the components transform by the same scalar under a change of frame and since
	\[P^{(0)}_k(z,\,z) = \sum_{j=0}^{d_k} |S_j(z)|^2_{h^{L^{\otimes k}}}=O(k^n)\,, \]
	it is easy to see that the sections $S_j$ do not share common zeros and the map $\Phi_k$ is well defined. Notice that for $(0,q)$-forms we have in local coordinates an expression such as \eqref{eq:form}, so we need to modify this construction as follows. Consider $L_1,\dots,L_n\in T^{1,0}M$ be an orthonormal basis of $T^{1,0}M$ with respect to $\langle\,\cdot\,|\,\cdot\,\rangle$ so that  $R^L(L_j,L_k)=\lambda_j\,\delta_{j,k}$ with $\lambda_1<0,\dots,\lambda_{n_-}<0$ and $\lambda_{n_-+1}>0,\dots,\lambda_{n}>0$. We always assume $q=n_-$ and we denote by $J_0$ the multi-index $(1,2,\dots,q)$. Denote by $e_1,\,\dots,\,e_n\in T^{*\,1,0}M$ a dual basis of $L_1,\,\dots,L_n$ and by $\overline{e}_{J_0}=\overline{e}_1\wedge\dots\wedge \overline{e}_q$, we also set
	\begin{equation} \label{eq:sec}
		S_{j,J_0}=\langle\,S_j\,|\,\ol e_{J_0}\,\rangle \in\mathcal{C}^{\infty}(M,L^{\otimes k})
	\end{equation}
	where $S_0,\,\dots,\,S_{d_k}$ is an orthonormal  basis of $H^{q}(M,L^{\otimes k})$, with $\dim_{\mathbb{C}} H^{q}(M,L^{\otimes k})=d_k+1$, in view of notation \eqref{eq:sec} define a map $\Phi_k^{(q)}: M \rightarrow\mathbb C\mathbb{P}^{d_k}$ such that
	\begin{equation} \label{eq:embmap}
		\Phi_k^{(q)}(z):= [S_{0,J_0}(z)\,:\,\cdots\,:\,S_{d_k,J_0}(z)]\in \mathbb{CP}^{d_k} \qquad \text{ on }  M\,,
	\end{equation}
	we shall show that this map is well-defined if $k$ is sufficiently large. We now state the main theorem of this paper.
	
	\begin{thm} \label{thm:maintheorem}  
		Let $(L,h^L)\To M$ be a holomorphic line bundle over a compact complex manifold $M$. Let 
		$R^L$ be the curvature of $L$ induced by $h^L$. Assume that $R^L$ is non-degenerate of constant signature $(n_-,n_+)$. With the notations used above, the map
		\[\Phi^{(n_-)}_k: M\To\mathbb C\mathbb P^{d_k}\]
		given by \eqref{eq:embmap} above is an embedding and 
		\[\left\lVert\frac{1}{k}\Phi_k^{(n_-)\,*}(\omega_{FS})-\omega  \right\rVert_{\mathcal{C}^{\infty}} \rightarrow 0 \]
		as $k$ goes to infinity in the $\mathcal{C}^{\infty}$ topology, where $\omega_{FS}$ denotes the Fubini-Study metric on $\mathbb C\mathbb P^{d_k}$ and $\omega:=\frac{i}{2\pi}R^L$. 
	\end{thm}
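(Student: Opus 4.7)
The plan is to deduce all three assertions (well-definedness of $\Phi_k^{(n_-)}$, the embedding property, and the asymptotic isometry) from the on-diagonal and off-diagonal expansions of the Bergman kernel $P^{(n_-)}_k$ on harmonic $L^{\otimes k}$-valued $(0,n_-)$-forms established in \cite{hsma,mm}. Well-definedness follows at once: in a local holomorphic frame $s$ of $L$, writing $S_j=s_j\,s^{\otimes k}$, one has
\[\sum_{j=0}^{d_k}\bigl|S_{j,J_0}(z)\bigr|^2_{h^{L^{\otimes k}}}=\bigl\langle P^{(n_-)}_k(z,z)\,\ol e_{J_0}\,\big|\,\ol e_{J_0}\bigr\rangle=k^n\,b_0(z)+O(k^{n-1}),\]
with $b_0(z)>0$ on $M$ by the leading term of the expansion, so the sections $S_{j,J_0}$ share no common zero for $k$ large.

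For injectivity I would use the coherent-state approach. Let $s_z:=\sum_j\ol{S_{j,J_0}(z)}\,S_j\in H^{n_-}(M,L^{\otimes k})$ be the peak section at $z$; a direct computation gives $\|s_z\|_k^2=\bigl\langle P^{(n_-)}_k(z,z)\,\ol e_{J_0}\,|\,\ol e_{J_0}\bigr\rangle$ and $(s_{z_1},s_{z_2})_k=\bigl\langle P^{(n_-)}_k(z_2,z_1)\,\ol e_{J_0}(z_1)\,|\,\ol e_{J_0}(z_2)\bigr\rangle$. Since (after trivializing $L$ near each point) $\Phi_k^{(n_-)}(z_1)=\Phi_k^{(n_-)}(z_2)$ is equivalent to proportionality of $s_{z_1}$ and $s_{z_2}$, i.e.\ to saturation of the Cauchy--Schwarz inequality $|(s_{z_1},s_{z_2})_k|^2\leq\|s_{z_1}\|_k^2\|s_{z_2}\|_k^2$, it suffices to rule this out. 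The off-diagonal estimate $|P^{(n_-)}_k(z_1,z_2)|=O(k^{-\infty})$ handles the regime $d(z_1,z_2)\geq\delta$, while the near-diagonal expansion in normal coordinates, whose leading Gaussian-oscillatory profile in the scaled variable $\sqrt{k}(z_1-z_2)$ is strictly dominated by the product of the diagonal peaks, handles $0<d(z_1,z_2)<\delta$ for $k$ large. Applying the same comparison to $s_z$ and its first holomorphic derivatives gives the injectivity of $d\Phi_k^{(n_-)}$, so $\Phi_k^{(n_-)}$ is an embedding.

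For the asymptotic-isometry statement I would exploit that, in the same local frame with $|s|_h^2=e^{-\phi}$ and $\omega=\tfrac{i}{2\pi}\pr\ddbar\phi$, only the ratios of the scalars $s_{j,J_0}$ enter the projective point, so that
\[\frac{1}{k}\Phi_k^{(n_-)\,*}\omega_{FS}=\omega+\frac{i}{2\pi k}\,\pr\ddbar\log\bigl\langle P^{(n_-)}_k(z,z)\,\ol e_{J_0}\,\big|\,\ol e_{J_0}\bigr\rangle.\]
The $\mathcal C^\infty$ version of the on-diagonal expansion $\langle P^{(n_-)}_k(z,z)\,\ol e_{J_0}\,|\,\ol e_{J_0}\rangle=k^n(b_0(z)+b_1(z)/k+\cdots)$ with $b_0>0$ gives $\log\langle P^{(n_-)}_k(z,z)\,\ol e_{J_0}\,|\,\ol e_{J_0}\rangle=n\log k+\log b_0(z)+O(1/k)$ in every $\mathcal C^\ell$-norm, hence the remainder on the right is $O(1/k)$ in every $\mathcal C^\ell$, which is the stated claim.

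The main obstacle I anticipate is the near-diagonal step of the injectivity argument. In the positive case ($n_-=0$) the Bergman kernel profile is a pure Gaussian and strict Cauchy--Schwarz is essentially immediate, whereas in signature $(n_-,n_+)$ the leading profile from \cite{hsma} is a genuine Gaussian-oscillatory kernel adapted to the multi-index $J_0$, and one must check carefully that proportionality of peak sections at distinct nearby points is incompatible with this model uniformly in $z$. Once the full $\mathcal C^\infty$-expansion of $\langle P^{(n_-)}_k(z,z)\,\ol e_{J_0}\,|\,\ol e_{J_0}\rangle$ is in hand, the propagation through a $\log$ and a $\pr\ddbar$ required for Tian's step is routine.
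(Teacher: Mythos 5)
Your overall architecture (on-diagonal expansion for well-definedness, off-diagonal decay plus a Cauchy--Schwarz saturation argument for injectivity, and the $\pr\ddbar\log$ computation for the Tian statement) matches the paper's, and the well-definedness and almost-isometry parts are essentially complete as you describe them. The gap is exactly where you suspect it, but it is more specific than ``check carefully'': your near-diagonal step fails in the regime where $x_k,y_k\to p$ with $\sqrt{k}\,d(x_k,y_k)\to 0$. There the Gaussian factor $e^{-2k\,{\rm Im\,}\Psi(x_k,y_k)}=e^{-O(k|x_k-y_k|^2)}=1+o(1)$, so $|(s_{x_k},s_{y_k})_k|^2$ and $\|s_{x_k}\|_k^2\|s_{y_k}\|_k^2$ agree to leading order $b_0(p,p)^2k^{2n}$, and the subleading corrections in the expansion (controlled only to relative order $k^{-1/2}$, or $k^{-1}$ at best) can swamp the would-be strict deficit $\sim k|x_k-y_k|^2$. ``Strict domination by the product of the diagonal peaks'' is therefore inconclusive precisely in this degenerate regime; it only closes the case $\limsup\sqrt{k}\,d(x_k,y_k)>0$. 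The paper handles the remaining regime by a separate convexity argument: setting $f_k(t)$ to be the normalized squared kernel along the segment from $y_k$ to $x_k$, Cauchy--Schwarz gives $0\le f_k\le 1$ with $f_k(0)=f_k(1)=1$ (the latter from the assumed proportionality), forcing a zero of $f_k''$ in $[0,1]$, while the expansion of ${\rm Im\,}\Psi$ forces $f_k''(t)\le -c\,k|x_k-y_k|^2(1+o(1))<0$ uniformly in $t$ --- a contradiction. You need to supply an argument of this second-derivative (or rescaled model) type to finish injectivity.

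A secondary point: your plan to get injectivity of $d\Phi_k^{(n_-)}$ by ``applying the same comparison to $s_z$ and its first holomorphic derivatives'' is the standard route for positive line bundles, but here $\Phi_k^{(n_-)}$ is not holomorphic (the paper's Theorem 1.2 only gives asymptotic holomorphy with respect to a modified almost complex structure), so derivative estimates on the peak sections are delicate. The paper instead deduces the immersion property directly from the already-proved convergence $\tfrac1k\Phi_k^{(n_-)\,*}\omega_{FS}\to\omega$ together with the non-degeneracy of $\omega$ (citing \cite[Lemma 5.16]{mm}): if $d\Phi_k^{(n_-)}(v)=0$ then $\Phi_k^{(n_-)\,*}\omega_{FS}(v,\cdot)=0$, contradicting $\omega(v,\cdot)\neq 0$ for $k$ large. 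You may want to adopt that shortcut rather than fight with derivatives of the coherent states.
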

	
	Recall that when $L$ is positive, the convergence of $\Phi_k^{(n_-)\,*}(\omega_{FS})/k$ to $\omega$ was proved in  Tian~\cite{tian}. Notice that, the pull-back of a positive form is not necessarily positive, and in fact in our setting $\omega$ is only assumed to be a closed non-degenerate $2$-form. Furthermore, let us remark that, in general, $\Phi_k^{(n_-)}$ is not holomorphic. In fact, if $\Phi_k^{(n_-)}$ was holomorphic, then $\Phi_k^{(n_-)\,*}(\omega_{FS})$ would be a K\"ahler form for each $k$.
	
	In \cite[Chapter 5.1.4]{mm} an embedding is defined into a Grassmannian using the Bergman kernel for $E\otimes L^{\otimes k}$, where $E$ is a vector bundle of rank $e$. Here we are tensoring $L^{\otimes k}$ with $T^{*0,q}M$. 
	
	Put 
	\[\widetilde T^{1,0}M:={\rm span\,}\{\overline L_1,\ldots,\overline L_{n_-}, L_{n_-+1},\cdots,L_n\}.\]
	Then, $\widetilde T^{1,0}M$ is an almost complex structure on $M$. The second main result of this paper is the following 
	
	\begin{thm}\label{t-gue220913yyd}
		With the notations and assumptions used above, the map $\Phi^{(n_-)}_k$ is pointwise asymptotically 
		holomorphic with respect to $\widetilde T^{1,0}M$ in the following sense: 
		Let $p\in M$ and take $s$ be a local holomorphic trivializing section of $L$ 
		defined on an open set $D$ of $p$ with $\abs{s}^2_{h^{L}}=e^{-2\phi}$, 
		$\phi\in\mathcal{C}^\infty(D,\mathbb R)$, $\phi(x)=O(\abs{x-p}^2)$. Write $S_{j,J_0}=s^k\otimes\tilde S_{j,J_0}$ on $D$, $\tilde S_{j,J_0}\in\mathcal{C}^\infty(D)$, $j=0,1,\ldots,d_k$. We have 
		\begin{equation}\label{e-gue220915yyd}
			\begin{split}
				&\sum^{d_k}_{j=0}|(L_t\tilde S_{j,J_0})(p)|^2=O(k^{n}),\ \ t=1,\ldots,n_-,\\
				&\sum^{d_k}_{j=0}|(\overline L_t\tilde S_{j,J_0})(p)|^2=O(k^{n}),\ \ t=n_-+1,\ldots,n.
			\end{split}
		\end{equation}
	\end{thm}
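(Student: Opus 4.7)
The plan is to express each sum in \eqref{e-gue220915yyd} as a mixed second derivative, evaluated on the diagonal, of a local representation of the Bergman kernel $P^{(n_-)}_k$, and then to read off the $O(k^n)$ bound from the off-diagonal asymptotic expansion of Hsiao--Marinescu~\cite{hsma}. To this end, introduce holomorphic coordinates $(z_1,\ldots,z_n)$ centered at $p$ in which $L_t=\pr/\pr z_t$ at $p$, and set
\[
B_k(z,w):=\sum_{j=0}^{d_k}\tilde S_{j,J_0}(z)\,\ol{\tilde S_{j,J_0}(w)}.
\]
Using the identity $\pr_{\ol w_t}\ol{\tilde S_{j,J_0}(w)}=\ol{(L_t\tilde S_{j,J_0})(w)}$ (and its $z\leftrightarrow w$ analogue), one obtains
\[
\sum_{j}\abs{(L_t\tilde S_{j,J_0})(p)}^2=\pr_{z_t}\pr_{\ol w_t}B_k\big|_{(p,p)},\qquad \sum_{j}\abs{(\ol L_t\tilde S_{j,J_0})(p)}^2=\pr_{\ol z_t}\pr_{w_t}B_k\big|_{(p,p)}.
\]
Because $S_{j,J_0}=s^k\otimes\tilde S_{j,J_0}$ and the frame $\{\ol e^I\}$ is orthonormal at $p$, $B_k(z,w)$ is, after absorbing the trivialization section $s^k(z)\otimes(s^k(w))^*$, the $(J_0,J_0)$-component of $P^{(n_-)}_k(z,w)$ in the frame $s^k\otimes\ol e^I$.

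Next, appeal to the Hsiao--Marinescu expansion~\cite{hsma}: in these coordinates $B_k$ admits, for $(z,w)$ near $(p,p)$, an asymptotic of the form
\[
B_k(z,w)=k^n\,e^{k\Psi(z,w)}\,a(z,w,k)+O(k^{-\infty}),
\]
where $a\sim\sum_{j\ge 0}k^{-j}a_j$ is a classical symbol and $\Psi$ is the complex-valued phase prescribed by the Fock model in signature $(n_-,n_+)$ (which incorporates the weight $\phi$). The structural property of this model, which is the heart of the argument, is that the leading kernel is antiholomorphic in $z_t$ for $t\le n_-$ and holomorphic in $z_t$ for $t>n_-$, with the conjugate behaviour in $w$; equivalently,
\[
\pr_{z_t}\pr_{\ol w_t}\Psi\big|_{(p,p)}=0\ (t\le n_-),\qquad \pr_{\ol z_t}\pr_{w_t}\Psi\big|_{(p,p)}=0\ (t>n_-).
\]
Combined with the normalization $\phi(x)=O(|x-p|^2)$, which forces $\pr_{z_t}\Psi|_{(p,p)}=\pr_{\ol w_t}\Psi|_{(p,p)}=0$ for every $t$, the differentiation $\pr_{z_t}\pr_{\ol w_t}(e^{k\Psi}a)$ at $(p,p)$ for $t\le n_-$ has its would-be leading contribution $k\,\pr_{z_t}\pr_{\ol w_t}\Psi|_{(p,p)}\cdot a_0(p,p)$ at order $k^{n+1}$ equal to zero, leaving only an $O(k^n)$ remainder coming from derivatives of $a$ and higher-order Taylor data of $\Psi$. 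The second estimate in \eqref{e-gue220915yyd} follows by the analogous argument with $\ol L_t$ and $t>n_-$.

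The main obstacle is the careful verification of these two structural vanishings of the mixed second derivatives of $\Psi$ at the diagonal in the stated directions. They are the precise manifestation of the compatibility between the Fock-type model Bergman kernel and the modified almost complex structure $\Td T^{1,0}M$, and they are extracted from the explicit form of the leading symbol $a_0$ and the quadratic Taylor part of $\Psi$ produced by the construction in~\cite{hsma}.
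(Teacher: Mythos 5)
Your proposal is correct, but it takes a genuinely different route from the paper's. The paper first proves a pointwise lemma (Lemma~\ref{l-gue220915yyd}): for \emph{any} $u\in H^q(M,L^{\otimes k})$ with $\norm{u}_k=1$ one has $\abs{(L_t\tilde u_{J_0})(p)}^2\leq Ck^{n}$ for $t\leq n_-$ (and the conjugate statement for $t>n_-$), obtained from the reproducing property $u=P^{(q)}_ku$, differentiation under the integral, and a Cauchy--Schwarz estimate; the theorem then follows by evaluating the lemma on the extremal element $u=\sum_jS_j\ol{(Z_t\tilde S_{j,J_0})(p)}\big/\big(\sum_j\abs{(Z_t\tilde S_{j,J_0})(p)}^2\big)^{1/2}$. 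You instead identify the orthonormal-basis sum directly with the mixed second derivative $\pr_{z_t}\pr_{\ol w_t}$ (resp.\ $\pr_{\ol z_t}\pr_{w_t}$) of the localized kernel evaluated on the diagonal and differentiate the phase expansion; this bypasses both the extremal function and the integral estimate and is arguably more direct, at the cost of having to track the trivialization factors (one should record the identity $B_k(z,w)=e^{k\phi(z)+k\phi(w)}\langle\,P^{(q)}_{k,s}(z,w)\ol e_{J_0}(w)\,|\,\ol e_{J_0}(z)\,\rangle$ and note that the weight is harmless because $\pr_{z_t}\pr_{\ol w_t}(\phi(z)+\phi(w))\equiv0$ and $d\phi(p)=0$, and that the $O(k^{-\infty})$ remainder in \eqref{e-gue220914yyd} is an estimate in every $\mathcal{C}^m$ norm, so it survives two differentiations). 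The step you flag as ``the main obstacle'' is not an obstacle at all: from the quadratic expansion \eqref{e-gue220914yydIII} of Theorem~\ref{t-gue220914yyd} one computes $\pr_{z_t}\pr_{\ol w_t}\Psi(0,0)=-i(\abs{\lambda_t}+\lambda_t)$ and $\pr_{\ol z_t}\pr_{w_t}\Psi(0,0)=i(\lambda_t-\abs{\lambda_t})$, so the first vanishes exactly when $\lambda_t<0$ (i.e.\ $t\leq n_-$) and the second exactly when $\lambda_t>0$ (i.e.\ $t>n_-$), while all first derivatives of $\Psi$ vanish at $(0,0)$; this is precisely the same cancellation $\abs{\lambda_t}+\lambda_t=0$ that the paper encodes as the statement $r_0(y)=O(\abs{y}^2)$ in \eqref{e-gue220915ycd}. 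With those two bookkeeping points made explicit, your argument is a complete and valid alternative proof.
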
 
	
	\begin{rem}\label{r-gue220918yyd}
		With the notations and assumptions used in Theorem~\ref{t-gue220913yyd}, 
		in general, we have that (see the proof of~\cite[Theorem 4.3]{hsma})
		\[\sum^{d_k}_{j=0}|(Z\hat S_{j,J_0})(p)|^2=O(k^{n+\frac{1}{2}}),\]
		where $Z\in\mathcal{C}^\infty(M,TM\otimes\mathbb C)$. The equation \eqref{e-gue220915yyd} says that we can have better estimates if $Z=L_t$, $t\in\{1,\ldots,n_-\}$ or $Z=\overline L_t$, $t\in\{n_-+1,\ldots,n\}$. 
	\end{rem}
	
	
	\section{Preliminaries}
	\label{sec:pre}
	
	Let $E$ be a $\mathcal{C}^\infty$ vector bundle over $M$. The fiber of $E$ at $x\in M$ will be denoted by $E_x$.
	Let $F$ be another vector bundle over $X$. We write
	$F\boxtimes E^*$ to denote the vector bundle over $M\times M$ with fiber over $(x, y)\in M\times M$
	consisting of the linear maps from $E_y$ to $F_x$.
	
	Let $D$ be an open set of $M$. 
	The spaces of
	smooth sections of $E$ over $D$ and distribution sections of $E$ over $D$ will be denoted by $\mathcal{C}^\infty(D, E)$ and $\mathcal{D}'(D, E)$ respectively.
	Let $\mathcal{E}'(D, E)$ be the subspace of $\mathcal D'(D, E)$ whose elements have compact support in $D$. 
	Put $\mathcal{C}^\infty_c(D,E):=\mathcal{C}^\infty(D,E)\bigcap\mathcal E'(D, E)$.
	We write $\Omega^{0,q}(D,E)$ and $\Omega^{0,q}(D)$ to denote $\mathcal{C}^\infty(D,T^{*0,q}M\otimes E)$ and 
	$\mathcal{C}^\infty(D,T^{*0,q}M)$ respectively. Put $\Omega^{0,q}_c(D,E):=\Omega^{0,q}(D,E)\cap\mathcal{E}'(D,E)$, 
	$\Omega^{0,q}_c(D):=\Omega^{0,q}(D)\cap\mathcal{E}'(D)$. 
	Let $W_{1}, W_{2}$ be bounded open subsets of $\mathbb R^{n_{1}}$ and $\mathbb R^{n_{2}}$, respectively. Let $E$ and $F$ be complex or real vector bundles over $W_{1}$ and $W_{2}$, respectively. Let $s_{1}, s_{2}\in\mathbb R$ and $n_{0}\in\mathbb Z$. For a $k$-dependent continuous function $F_{k}: \mathcal{C}^\infty_c(W_{1}, E)\rightarrow\mathcal{D}'(W_{2}, F)$ with distribution kernel 
	$F_k(x,y)\in\mathcal{C}^\infty(W_2\times W_1, F\boxtimes E^*)$. 
	We write $F_k=O(k^{-\infty})$ on $W_2\times W_1$ or $F_k(x,y)=O(k^{-\infty})$ on $W_2\times W_1$ if for every compact set $K\Subset W_2\times W_1$, every $m\in\mathbb N\bigcup\{0\}$ and every $N>0$, there is a constant $C_{N,m,K}>0$ independent of $k$ such that
	\[\norm{F_k(x,y)}_{\mathcal{C}^m(K,F\boxtimes E^*)}\leq C_{N,m,K}k^{-N},\]
	for every $k\gg1$. Let $G_{k}: \mathcal{C}^\infty_c(W_{1}, E)\rightarrow\mathcal{D}'(W_{2}, F)$ be another $k$-dependent continuous operator with distribution kernel $G_k(x,y)\in\mathcal{C}^\infty(W_2\times W_1, F\boxtimes E^*)$. 
	We write $F_k=G_k+O(k^{-\infty})$ on $W_2\times W_1$ or $F_k(x,y)=G_k(x,y)+O(k^{-\infty})$ on $W_2\times W_1$ if $F_k-G_k=O(k^{-\infty})$ on $W_2\times W_1$. 
	
	We recall the definition of the semi-classical symbol spaces 
	
	\begin{defn} \label{d-gue140826}
		Let $W$ be an open set in $\mathbb R^N$. 
		We let $S(1;W)$ be the set of $a\in\mathcal{C}^\infty(W)$ such that for every $\alpha\in(\mathbb N\bigcup\{0\})^{N}$, there exists $C_\alpha>0$ such that 
		\[\abs{\pr^\alpha a(x)}\leq C_\alpha,\] for every $x\in W$.
		For $m\in\mathbb R$, let
		\[S^m(1)=S^m(1;W):=\Big\{(a(\cdot,k))_{k\in\mathbb R}\,|\,(k^{-m}a(\cdot,k))\in S^0(1;W)\Big\}.\]
		

		Consider a sequence $a_j\in S^{m_j}(1)$, $j\in\mathbb N\cup\{0\}$, where $m_j\searrow-\infty$,
		and let $a\in S^{m_0}(1)$. We say that
		\[
		a(\cdot,k)\sim
		\sum\limits^\infty_{j=0}a_j(\cdot,k)\:\:\text{in $S^{m_0}(1)$},
		\]
		if for every
		$\ell\in\mathbb N\cup\{0\}$ we have $a-\sum^{\ell}_{j=0}a_j\in S^{m_{\ell+1}}(1)$. 
		For a given sequence $a_j$ as above, we can always find such an asymptotic sum
		$a$, which is unique up to an element in $S^{-\infty}(1)=S^{-\infty}(1;W):=\cap _mS^m(1)$.
		
		We say that $a(\cdot,k)\in S^{m}(1)$ is a classical symbol on $W$ of order $m$ if
		\begin{equation} \label{e-gue13628I}
			a(\cdot,k)\sim\sum\limits^\infty_{j=0}k^{m-j}a_j\: \text{in $S^{m}(1)$},\ \ a_j(x)\in
			S^0(1),\ j=0,1\ldots.
		\end{equation}
		The set of all classical symbols on $W$ of order $m$ is denoted by
		$S^{m}_{{\rm cl\,}}(1)=S^{m}_{{\rm cl\,}}(1;W)$. 
		
		Similarly, we define $S^m(1;Y,E)$, $S^{m}_{{\rm cl\,}}(1;Y,E)$ in the standard way, where $Y$ is a smooth manifold and $E$ is a vector bundle over $Y$. 
	\end{defn}
	
	We come back to our situation. We will use the same notations as Section~\ref{s-gue220919yyd}. 
	We will identify the curvature form $R^L$ with the Hermitian matrix 
	\[\dot{R}^L\in\mathcal{C}^{\infty}(M,\mathrm{End}(T^{1,0}M))\,, \quad \langle\,R^L(z)\,,\,U\wedge\overline{V} \,\rangle = \langle\,\dot{R}^L(z)U\,|\,V\,\rangle,\]
	for every $U$ and $V$ in $T^{1,0}_zM$, $z\in M$. We denote by $W$ the subbundle of rank $q=n_-$ of $T^{1,0}M$ generated by the eigenvectors corresponding to negative eigenvalues of $\dot{R}^L$. Then, ${\rm det\,}\ol{W}^*:=\Lambda^q\ol{W}^*$ is a rank one subbundle, where $\ol{W}^*$ is the dual bundle of the complex conjugate bundle of $W$ and $\Lambda^q\ol{W}^*$ is the vector space of all finite sum of $v_1\wedge\cdots\wedge v_q$, $v_1,\ldots,v_q\in\ol{W}^*$. We denote by 
	$I_{{\rm det\,}\ol{W}^*}$ the orthogonal projection from $T^{*0,q}M$ onto ${\rm det\,}\ol{W}^*$. For $x\in M$, let $\det\dot{R}^L(x):=\mu_1(x)\cdots\mu_n(x)$, where $\mu_j(x)$, $j=1,\ldots,n$, are eigenvalues of $\dot{R}^L(x)$. 
	
	Let $s$ and $s_1$ be local holomorphic trivializing sections of $L$ defined on open sets $D\subset M$ and $D_1\subset M$ respectively, $\abs{s}^2_{h^L}=e^{-2\phi}$, $\abs{s_1}^2_{h^L}=e^{-2\phi_1}$, $\phi\in\mathcal{C}^\infty(D,\mathbb R)$, $\phi\in\mathcal{C}^\infty(D_1,\mathbb R)$. The localization of $P^{(q)}_k$ with respect to $s$, $s_1$ are given by 
	\[\begin{split}
		P^{(q)}_{k,s,s_1}: \Omega^{0,q}_c(D)&\To\Omega^{0,q}(D_1),\\
		u&\To s^{-k}_1e^{-k\phi_1}(P^{(q)}_k(s^ke^{k\phi}u)).
	\end{split}\]
	Let $P^{(q)}_{k,s,s_1}(x,y)\in\mathcal{C}^\infty(D_1\times D,T^{*0,q}M\boxtimes(T^{*0,q}M)^*)$ be the distribution kernel of $P^{(q)}_{k,s,s_1}$. When $D=D_1$, $s=s_1$, we write $P^{(q)}_{k,s}:=P^{(q)}_{k,s,s}$, 
	$P^{(q)}_{k,s}(x,y):=P^{(q)}_{k,s,s}(x,y)$. We recall the following result~\cite[Theorem 4.11]{hsma} 
	
	\begin{thm}\label{t-gue220914yyd} 
		With the notations and assumptions used above and recall that we let $q=n_-$. Let $s$ be a local holomorphic trivializing section of $L$ defined on an open set $D\subset M$, $\abs{s}^2_{h^L}=e^{-2\phi}$. We have 
		\begin{equation}\label{e-gue220914yyd}
			P^{(q)}_{k,s}(x,y)=e^{ik\Psi(x,y)}b(x,y,k)+O(k^{-\infty})\ \ \mbox{on $D\times D$},
		\end{equation}
		where $b(x,y,k)\in S^n_{{\rm cl\,}}(1;D\times D,T^{*0,q}M\boxtimes(T^{*0,q}M)^*)$, 
		$b(x,y,k)\sim\sum_{j=0}b_j(x,y)k^{n-j}$ in $S^n_{{\rm cl\,}}(1;D\times D,T^{*0,q}M\boxtimes(T^{*0,q}M)^*)$, $b_j(x,y)\in\mathcal{C}^\infty(D\times D,T^{*0,q}M\boxtimes(T^{*0,q}M)^*)$, $j=0,1,\ldots$, 
		\begin{equation}\label{e-gue220914yydI}
			b_0(x,x)=(2\pi)^{-n}I_{{\rm det\,}\ol{W}^*}\abs{\det\dot{R}^L(x)},\ \ \mbox{for every $x\in M$}, 
		\end{equation}
		and $\Psi(x,y)\in\mathcal{C}^\infty(D\times D)$, 
		\begin{equation}\label{e-gue220914yydII}
			\begin{split}
				&\Psi(x,y)=-\ol\Psi(y,x),\\
				&\mbox{there is a $c>0$ such that ${\rm Im\,}\Psi(x,y)\geq c\abs{x-y}^2$, for all $x,y\in D$},\\
				&\mbox{$\Psi(x,y)=0$ if and only of $x=y$}. 
			\end{split}
		\end{equation}
		Fix $p\in D$, take $z=(z_1,\ldots,z_n)$ be local coordinates of $M$ defined on $D$ and $s$ so that 
		\begin{equation}\label{e-gue220915yydI}
			\begin{split}
				&z(p)=0,\\
				&\langle\,\frac{\pr}{\pr z_j}\,|\,\frac{\pr}{\pr z_t}\,\rangle=\delta_{j,t}+O(\abs{z}),\ \ j,t=1,\ldots,n,\\
				&\phi(z)=\sum^n_{j=1}\lambda_j\abs{z_j}^2+O(\abs{z}^3),
			\end{split}
		\end{equation}
		then 
		\begin{equation}\label{e-gue220914yydIII}
			\Psi(z,w)=i\sum^n_{j=1}\abs{\lambda_j}\abs{z_j-w_j}^2+i\sum^n_{j=1}\lambda_j(\ol z_jw_j-z_j\ol w_j)+O(\abs{(z,w)}^3). 
		\end{equation}
	\end{thm}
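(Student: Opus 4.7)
Since the theorem is quoted from \cite{hsma}, the plan I would follow mirrors the standard parametrix construction for Bergman kernels, adapted to the mixed signature case with $q=n_-$. The proof divides into three stages: (a) a spectral gap for $\square^{(q)}_k$ which produces an off-diagonal $O(k^{-\infty})$ decay of $P^{(q)}_k(x,y)$; (b) a local parametrix of the form $e^{ik\Psi(x,y)}b(x,y,k)$ on a coordinate chart; and (c) a model computation at the diagonal to pin down $b_0(x,x)$.

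For (a), I would use the Bochner--Kodaira--Nakano formula together with the constant-signature hypothesis to show that, for $q=n_-$, there is $c_0>0$ independent of $k$ with $\mathrm{Spec}\,\square^{(q)}_k\subset\{0\}\cup[c_0k,\infty)$ when $k$ is large. Applying a Schwartz function of $\square^{(q)}_k/k$ whose Fourier transform is compactly supported, together with finite propagation speed for the associated half-wave group, then yields $P^{(q)}_k(x,y)=O(k^{-\infty})$ outside any fixed neighbourhood of the diagonal. This reduces the analysis of $P^{(q)}_{k,s}(x,y)$ to a fixed coordinate chart $D$ around $p$.

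For (b), I would look for a complex phase $\Psi\in\mathcal{C}^\infty(D\times D)$ and a classical symbol $b(x,y,k)\in S^n_{\mathrm{cl}}(1)$ such that the operator $Au(x)=\int e^{ik\Psi(x,y)}b(x,y,k)u(y)\,\mathrm{dV}(y)$ satisfies $\square^{(q)}_{k,s}A=O(k^{-\infty})$, $A=A^*+O(k^{-\infty})$ and $A^2=A+O(k^{-\infty})$. The first relation forces $\Psi$ to solve a complex eikonal equation whose characteristic variety is the complexification of the symbol of $\square^{(q)}_{k,s}$; a Kuranishi--Melin--Sj\"ostrand style construction in a complex neighbourhood of the diagonal produces $\Psi$ with the normal form~\eqref{e-gue220914yydIII}, from which the symmetry and vanishing properties in~\eqref{e-gue220914yydII} follow by inspection of the quadratic part. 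The transport equations then determine the $b_j$ recursively, and uniqueness of the Bergman projection combined with the spectral gap from (a) forces $A$ to coincide with $P^{(q)}_{k,s}$ modulo $O(k^{-\infty})$.

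For (c), I would freeze coefficients at $p$ and reduce to the model on $\mathbb{C}^n$ with flat metric and weight $\phi_0(z)=\sum_j\lambda_j|z_j|^2$; the associated Kodaira Laplacian on $(0,n_-)$-forms admits an explicit Landau-level decomposition, and the choice $q=n_-$ singles out a single level whose Bergman projection has diagonal value $(2\pi)^{-n}|\det\dot{R}^L(p)|\,I_{\det\ol{W}^*}$, giving~\eqref{e-gue220914yydI}. The hardest step, in my view, is the phase construction and the verification of $\mathrm{Im}\,\Psi(x,y)\geq c|x-y|^2$: in the mixed-signature regime the model kernel already carries the oscillatory cross term $i\sum_j\lambda_j(\ol z_jw_j-z_j\ol w_j)$ appearing in~\eqref{e-gue220914yydIII}, so the positivity of $\mathrm{Im}\,\Psi$ cannot be read off from a Gaussian in $|x-y|^2$ alone, and one must control the full complex Hessian of $\Psi$ along the diagonal.
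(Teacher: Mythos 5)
You should first note that the paper does not prove this statement at all: Theorem~\ref{t-gue220914yyd} is imported verbatim from \cite[Theorem 4.11]{hsma}, so there is no in-paper argument to match your outline against. Judged against the proof in that reference, your three-stage plan is the right general shape: the spectral gap $\mathrm{Spec}\,\square^{(n_-)}_k\subset\{0\}\cup[c_0k,\infty)$ from the Bochner--Kodaira--Nakano formula, the resulting off-diagonal $O(k^{-\infty})$ decay via functions of $\sqrt{\square^{(q)}_k}$ with compactly supported Fourier transform and finite propagation speed (note it is $\sqrt{\square^{(q)}_k}$, not $\square^{(q)}_k/k$, that has unit propagation speed), and the frozen-coefficient model on $\mathbb{C}^n$ with weight $\sum_j\lambda_j|z_j|^2$ whose lowest Landau level on $(0,n_-)$-forms produces exactly \eqref{e-gue220914yydI}. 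Your remark that $i\sum_j\lambda_j(\ol z_jw_j-z_j\ol w_j)$ is real, so that $\mathrm{Im}\,\Psi$ is controlled only by the term $\sum_j|\lambda_j||z_j-w_j|^2$ plus cubic errors, correctly identifies where \eqref{e-gue220914yydII} requires care.

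The genuine gap is in stage (b), in the sentence ``uniqueness of the Bergman projection combined with the spectral gap forces $A$ to coincide with $P^{(q)}_{k,s}$ modulo $O(k^{-\infty})$.'' The three properties you impose --- $\square^{(q)}_{k}A=O(k^{-\infty})$, $A=A^*+O(k^{-\infty})$, $A^2=A+O(k^{-\infty})$ --- do not characterize the Bergman projection: $A=0$ satisfies all three. The spectral gap gives $P^{(q)}_kA=A+O(k^{-\infty})$ and, by adjoints, $AP^{(q)}_k=A+O(k^{-\infty})$, but nothing in your list forces $A$ to act as the identity on $\mathrm{Ker}\,\square^{(q)}_k$. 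You need a fourth input normalizing $A$ on the harmonic space; in \cite{hsma} this is supplied by the independently established asymptotics of the spectral function (the scaled local model is used not merely as a consistency check for $b_0$ but as the mechanism identifying the leading behaviour of $P^{(q)}_k$ near the diagonal), and in the positive $q=0$ case the analogous role is played by the local reproducing property for holomorphic functions, which is unavailable for harmonic $(0,n_-)$-forms in mixed signature. Without that step your parametrix is only shown to be absorbed by $P^{(q)}_k$, not equal to it.
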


	\section{Proof of Theorem \ref{thm:maintheorem}}
	
	We always assume $q=n_-$. In view of Theorem~\ref{t-gue220914yyd}, we have 
	\begin{equation} \label{eq:expan}
		P_k^{(q)}(z,z)= \mathbf{b_0}(z)\,\mathrm{I}_{\det \overline{W}^*} k^n + O(k^{n-1})\,,
	\end{equation}
	with $\mathbf{b_0}(z)> 0$. By \eqref{eq:expan} follows that the map $\Phi^{(q)}_k$ is well-defined.
	
	We shall now prove that $\Phi_k^{(q)}$ is injective. By contrary, up to consider a subsequence, suppose that we can find $x_k,\,y_k$ in $M$, such that $x_k\neq y_k$ and $\Phi_k^{(q)}(x_k)=\Phi_k^{(q)}(y_k)$, for each $k$. Thus, by noticing
	\begin{equation}\label{e-gue220920yyd}
		\begin{split}
			&\forall k\,:\, \Phi_k^{(q)}(x_k)=\Phi_k^{(q)}(y_k) \iff \\
			&\forall k,\,\,\exists \lambda_k\in \mathbb{C}^*\,:\, (S_{0,J_0}(x_k),\,\dots,\,S_{d_k,J_0}(x_k))= \lambda_k\cdot (S_{0,J_0}(y_k),\,\dots,\,S_{d_k,J_0}(y_k)) \end{split}\end{equation}
	we can also assume that for each harmonic $(0,q)$-forms $g_k\in H^q(M,L^{\otimes k})$, we have
	\begin{equation}\label{eq:cont}
		\abs{g_{k,J_0}(x_k)}^2_{h^{L^{\otimes k}}} \geq\abs{g_{k,J_0}(y_k)}^2_{h^{L^{\otimes k}}},\ \ \mbox{for all $k\gg1$}.
	\end{equation} 
	Assume that $\lim_{k\To+\infty}x_k=p$, $\lim_{k\To+\infty}y_k=q$, $p\neq q$. 
	Let $s$ and $s_1$ be local holomorphic trivializing sections of $L$ defined on open sets $D\subset M$ and $D_1\subset M$ respectively, $p\in D$, $q\in D_1$, $\ol D\cap\ol D_1=\emptyset$. 
	From~\cite[Lemma 3.7]{hsma1}, we see that 
	\begin{equation}\label{e-gue220914ycd}
		P^{(q)}_{k,s,s_1}(x,y)=O(k^{-\infty})\ \ \mbox{on $D\times D_1$}.
	\end{equation}
	From Theorem~\ref{t-gue220914yyd} and \eqref{e-gue220914ycd}, we can find $g_k\in H^q(M,L^{\otimes k})$ such that 
	\begin{equation}\label{e-gue220919yyd}
		\abs{g_{k,J_0}(x_k)}^2_{h^{L^{\otimes k}}}<\abs{g_{k,J_0}(y_k)}^2_{h^{L^{\otimes k}}},\ \ \mbox{for all $k\gg1$}.
	\end{equation}
	From \eqref{eq:cont} and \eqref{e-gue220919yyd}, we get a contradiction. Therefore, $x_k$ and $y_k$ must converge to the same point $p\in M$. We can distinguish two cases: either $\limsup_k \sqrt{k}\lvert x_k -y_k \rvert=C>0$, $C\in ]0,\infty ]$ or $\limsup_k \sqrt{k}\lvert x_k -y_k \rvert=0$. Here the distance $\lvert x_k-y_k\rvert$, $x_k, y_k\in M$, is meant with respect to the Riemannian structure  ${g}=(\langle\cdot,\cdot\rangle+\overline{\langle \cdot,\cdot\rangle})/2$ on $TM$.
	
	Let $s$ be local holomorphic trivializing section of $L$ defined on an open set $D\subset M$, $p\in D$, $\abs{s}^2_{h^L}=e^{-2\phi}$. 
	Let us study the first case. For every $y_k\in D$, let us set \[g_k(\cdot):=P_{k}^{(q)}(\cdot,\,y_k)\ol e_{J_0}(y_k)\in H^q(M,L^{\otimes k}).\] 
	On $D$, write $g_k=s^k\otimes\tilde g_k$, $\tilde g_k\in\Omega^{0,q}(D)$. Put $\hat g_k:=e^{-k\phi}\tilde g_k$. 
	In view of Theorem~\ref{t-gue220914yyd}, we have
	\[\hat g_k(x)=e^{ik\, \Psi(x, y_k)}b(x, y_k,k)\ol e_{J_0}(y_k)+O(k^{-\infty})\ \ \mbox{on $D$}.\]
	Let $\hat g_{k,J_0}(x):=\langle\,\hat g_k(x)\,|\,\ol e_{J_0}(x)\,\rangle\in\mathcal{C}^\infty(D)$. 
	Notice that
	\[\begin{split}
		&\abs{\hat g_{k,J_0}(x_k)}^2=\abs{e^{ik\, \Psi(x_k,y_k)}\langle\,b(x_k, y_k,k) \ol e_{J_0}(y_k)\,|\,\ol e_{J_0}(x_k)\,\rangle}^2+O(k^{-\infty})\\ &\leq  e^{-2k\,\mathrm{Im}\Psi({x}_k, {y}_k)}\abs{\langle\,b({x}_k,{y}_k,k)\ol e_{J_0}(y_k)\,|\,e_{J_0}(x_k)\,\rangle}^2+O(k^{-\infty})\end{split}\]
	and
	\[\abs{\hat g_{k,J_0}(y_k)}^2=\abs{\langle\,  b({y}_k,{y}_k,k)\overline{e}_{J_0}(y_k)\,|\,e_{J_0}(x_k)\,\rangle}^2+O(k^{-\infty}).\]
	Thus, by recalling that $x_k,\,y_k \rightarrow p$, we get
	\[\lim_{k\To+\infty}  \frac{\abs{\hat g_{k,J_0}(x_k)}^2}{k^{2n}} \leq  e^{-2c\,C^2} \abs{b_0(p,p)}^2  <  \abs{b_0(p,p)}^2 = \lim_{k\To+\infty}  \frac{\abs{\hat g_{k,J_0}(y_k)}^2}{k^{2n}} \]
	which contradicts \eqref{eq:cont}, where $c>0$ is a constant independent of $k$. 
	
	Now, let us focus on the second case. 
	Put $P^{(q)}_{k,s,J_0,J_0}(x,y):=\langle\,P^{(q)}_{k,s}(x,y)e_{J_0}(y)\,|\,e_{J_0}(x)\,\rangle$ 
	and let $P^{(q)}_{k,s,J_0,J_0}(x):=P^{(q)}_{k,s,J_0,J_0}(x,x)$.
	Let us set
	\[f_k(t):=\frac{\abs{P_{k,s,J_0,J_0}^{(q)}(tx_k+(1-t)y_k,y_k)}^2} {P^{(q)}_{k,s,J_0,J_0}(tx_k+(1-t)y_k)P^{(q)}_{k,s,J_0,J_0}(y_k)}.   \]
	From Theorem~\ref{t-gue220914yyd}, it is then easy to prove that
	\begin{equation}f_k(t)\sim e^{-2k\,\mathrm{Im}\Psi(t{x}_k+(1-t){y}_k, {y}_k)}\cdot\left[1+\frac{1}{\sqrt{k}}R_k(t)\right] \label{eq:aa}
	\end{equation}
	where $R_k=O(1)$ and the symbol $\sim$ means ``has the same asymptotic as''. For ease of notation, let us set $F_k(t):=-2k\,\mathrm{Im}\Psi(t{x}_k+(1-t){y}_k, {y}_k)$ for given $x_k$, $y_k$. Now, by applying the following formulas
	\[\lvert F'_k(t) \rvert=\lvert\langle -2k\,\mathrm{Im}\Psi'_x(tx_k+(1-t)y_k,y_k), x_k-y_k \rangle\rvert \leq \frac{1}{c_0}k \lvert x_k-y_k\rvert^2  \]
	and
	\[F''_k(t)=\langle -2k\,\mathrm{Im}\Psi''_x(tx_k+(1-t)y_k, y_k), x_k-y_k \rangle < -c_0k\,\lvert x_k-y_k\rvert^2\]
	for the computation of the second derivative of $f_k(t)$ in equation \eqref{eq:aa}, here $c_0$ is a positive constant, and by using the assumption $\limsup_k \sqrt{k}\lvert x_k -y_k \rvert=0$, after some computations, we obtain that
	\begin{equation}\limsup_k\frac{f''(t)}{k\,\lvert x_k-y_k\rvert^2}< 0  \label{eq:lim}
	\end{equation}
	for each $t$.
	
	We now show that if we apply the assumption \eqref{e-gue220920yyd} for computing the $\limsup$ appearing in \eqref{eq:lim} we get a different result. By Cauchy-Schwartz inequality we have $0\leq f_k(t)\leq 1$, for any $t\in [0,1]$ and $f_k(0)=f_k(1)=1$. Thus, for each $k$, there is a $t_k\in [0, 1]$ such that $f''(t_k)=0$. Hence,
	\[\limsup_k\frac{f''(t)}{k\,\lvert x_k-y_k\rvert^2}\geq 0 \]
	which contradicts \eqref{eq:lim}. 
	
	Eventually, we shall prove the last part of the theorem concerning pull-back of the Fubini-Study form, as a corollary we get that $\mathrm{d}\Phi^{(q)}_k$ is injective and thus $\Phi^{(q)}_k$ is an embedding (see~\cite[Lemma 5.16]{mm}). Recall that the Fubini Study K\"ahler form on $\mathbb{CP}^{m}$ is given in homogeneous coordinates $[w_0:\cdots:w_{m}]$ by
	\begin{equation}\omega_{\mathrm{FS}}:=\frac{i}{2\pi}\partial\overline{\partial}\log\left(\sum_{j=0}^m\lvert w_i \rvert^2\right)\,. \label{eq:fs} 
	\end{equation}
	Let us apply \eqref{eq:fs} to our setting; local holomorphic section $s$ of $L$ over an open set $D$ in $M$. It induces a section $s^k$ of the $k$-th tensor power of the line bundle $L^{\otimes k}$ on $D$. By recalling \eqref{eq:sec}, we have $S_{j,J_0}=f_{j,J_0}(z)\otimes s^k$ for a smooth function $f_{j,J_0}$ on $D$, $j=0,1,\ldots,d_k$. By definition \eqref{eq:embmap}, we get
	\begin{align*}
		\frac{1}{k}\Phi^{(q)\,*}_k(\omega_{\mathrm{FS}})_z&=\frac{i}{2\pi k}\partial\overline{\partial}\log\left(\sum_{j=0}^{d_k}\lvert f_{j,J_0}(z) \rvert^2\right)=\frac{i}{2\pi k}\partial\overline{\partial}\log\left(\sum_{j=0}^{d_k}\frac{\abs{S_{j,J_0}(z)}^2_{h^{L^{\otimes k}}}}{\abs{s^k(z)}^2_{h^{L^{\otimes k}}}}\right) \\
		&=\frac{i}{\pi}\pr\ddbar\phi+\frac{i}{2\pi k}\partial\overline{\partial}\log\left(\sum_{j=0}^{d_k}{\abs{S_{j,J_0}(z)}^2_{h^{L^{\otimes k}}}}\right)\,. 
	\end{align*}
	Thus, the statement of the theorem follows from
	\[\norm{\pr\ddbar\log P^{(q)}_{k,s,J_0,J_0}(x,x)}_{\mathcal{C}^\infty}=O(1)\]
	in view of the asymptotic expansion of $P^{(q)}_{k,s}(x,x)$. 
	
	\section{Proof of Theorem~\ref{t-gue220913yyd}}\label{s-gue220915yyd}
	
	Fix $p\in M$ and let $s$ be a local trivializing section of $L$ defined on an open set 
	$D$ of $p$, $\abs{s}^2_{h^L}=e^{-2\phi}$. Let $z=(z_1,\ldots,z_n)=(x_1,\ldots,x_{2n})$, $z_j=x_{2j-1}+ix_{2j}$, $j=1,\ldots,n$, be local coordinates of $M$ defined on $D$. We take $z$ and $s$ so that \eqref{e-gue220915yydI} hold. Assume that $\lambda_1<0,\ldots,\lambda_{n_-}<0$, $\lambda_{n_-+1}>0,\ldots,\lambda_n>0$. We have $L_j=\frac{\pr}{\pr z_j}$ at $p$, $j=1,\ldots,n$. 
	We need 
	
	\begin{lem}\label{l-gue220915yyd}
		Let $u\in H^q(M,L^{\otimes k})$ with $\norm{u}_k=1$. On $D$, write $u=s^k\otimes\tilde u$, $\tilde u\in\Omega^{0,q}(D)$. Put $\tilde u_{J_0}(x):=\langle\,\tilde u(x)\,|\,\ol e_{J_0}(x)\,\rangle\in\mathcal{C}^\infty(D)$. 
		There is a constant $C>0$ independent of $k$ and $u$ such that 
		\begin{equation}\label{e-gue220915ycda}
			\begin{split}
				&\abs{(L_t\tilde u_{J_0})(p)}^2\leq Ck^{n},\ \ t=1,\ldots,n_-,\\
				&\abs{(\ol L_t\tilde u_{J_0})(p)}^2\leq Ck^{n},\ \ t=n_-+1,\ldots,n.
			\end{split}
		\end{equation}
	\end{lem}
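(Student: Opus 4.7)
The plan is to combine a reproducing-kernel Cauchy-Schwarz argument with the explicit phase of the Bergman kernel from Theorem~\ref{t-gue220914yyd}. First I would localize by writing $u=s^k\otimes\tilde u$ on $D$, set $\hat u:=e^{-k\phi}\tilde u$, and observe that because of the normal form \eqref{e-gue220915yydI} we have $(L_t\phi)(p)=(\ol L_t\phi)(p)=\phi(p)=0$, so
\[(L_t\tilde u_{J_0})(p)=(L_t\hat u_{J_0})(p),\qquad (\ol L_t\tilde u_{J_0})(p)=(\ol L_t\hat u_{J_0})(p).\]
I would then introduce the conjugated kernel $\hat P^{(q)}_{k,s}(x,y):=e^{-k\phi(x)}P^{(q)}_{k,s}(x,y)e^{-k\phi(y)}$; by exact idempotence of $P^{(q)}_k$ together with the off-diagonal decay \eqref{e-gue220914ycd}, $\hat P^{(q)}_{k,s}$ reproduces $\hat u$ on a neighborhood of $p$ modulo $O(k^{-\infty})$, and $\|\hat u\|_{L^2(D)}\le\|u\|_k=1$.

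Differentiating the local reproducing identity $\hat u_J(x)=\sum_I\int\hat P^{(q)}_{k,s,J,I}(x,y)\hat u_I(y)\,dV(y)+O(k^{-\infty})$ in $L_t$ and applying Cauchy-Schwarz gives
\[|(L_t\hat u_{J_0})(p)|^2\le\Bigl(L_{t,x}\ol L_{t,y}\hat P^{(q)}_{k,s,J_0,J_0}(x,y)\Bigr)\Big|_{x=y=p}+O(k^{-\infty}),\]
since $\sum_I\int|L_{t,x}\hat P^{(q)}_{k,s,J_0,I}(x,y)|^2\,dV(y)$ equals the mixed derivative of the diagonal kernel, by self-adjointness and approximate idempotence of $\hat P^{(q)}_{k,s}$. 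The analogous inequality with the roles of $L_t$ and $\ol L_t$ interchanged controls $\ol L_t\hat u_{J_0}$ for $t>n_-$.

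The crux is to show that for the ``good'' directions the right-hand side is only $O(k^n)$ rather than the $O(k^{n+1})$ one would naively get from differentiating the oscillatory factor $e^{ik\Psi}$. Writing $\hat P^{(q)}_{k,s,J_0,J_0}(x,y)=e^{k\tilde\Phi(x,y)}b_{J_0,J_0}(x,y,k)+O(k^{-\infty})$ with $\tilde\Phi:=i\Psi-\phi(x)-\phi(y)$ and expanding by Leibniz, the only prefactors that can enhance the order in $k$ at $x=y=p$ are $L_{t,x}\tilde\Phi$, $\ol L_{t,y}\tilde\Phi$ and the mixed second derivative $L_{t,x}\ol L_{t,y}\tilde\Phi$. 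Using \eqref{e-gue220914yydIII} a direct calculation yields $L_{t,x}\tilde\Phi(p,p)=\ol L_{t,y}\tilde\Phi(p,p)=0$ and, crucially,
\[L_{t,x}\ol L_{t,y}\Psi(x,y)\big|_{x=y=p}=-i|\lambda_t|-i\lambda_t,\]
which vanishes precisely when $\lambda_t<0$; symmetrically $L_{t,y}\ol L_{t,x}\Psi$ vanishes at $p$ when $\lambda_t>0$. Hence in both ``good'' cases the potential $O(k)$ enhancement disappears and one is left with $L_{t,x}\ol L_{t,y}b_{J_0,J_0}(x,y,k)|_{x=y=p}=O(k^n)$, delivering \eqref{e-gue220915ycda}.

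The hard part is pinpointing why precisely the directions $L_t$ for $t\le n_-$ and $\ol L_t$ for $t>n_-$ are special: this is the algebraic cancellation in the mixed derivative of $\Psi$ displayed above, which reflects that $\widetilde T^{1,0}M$ is the ``correct'' almost-complex structure for which the kernel looks approximately (anti-)holomorphic. The remaining routine check is that the cutoff and off-diagonal errors from passing between $M$ and $D$ remain $O(k^{-\infty})$ under the normalization $\|u\|_k=1$, which follows by combining \eqref{e-gue220914ycd} applied to a finite atlas with the $L^2$-boundedness of $u$.
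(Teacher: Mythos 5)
Your argument is correct and hinges on exactly the same cancellation that drives the paper's proof: from \eqref{e-gue220914yydIII} the $t$-th derivative of $\Psi$ in the "good'' direction carries the factor $\abs{\lambda_t}+\lambda_t$ (resp.\ $\lambda_t-\abs{\lambda_t}$), which vanishes precisely for $t\le n_-$ (resp.\ $t>n_-$). What differs is how you cash in that cancellation. The paper differentiates the reproducing identity only in $x$, observes that for the good directions $Z_{t,x}\Psi(p,y)=O(\abs{y}^2)$ -- so the coefficient of $k^{n+1}$ in $Z_{t,x}P^{(q)}_{k,s}(p,y)$ is $r_0(y)=O(\abs{y}^2)$ -- and then applies Cauchy--Schwarz against $\norm{u}_k=1$ together with the Gaussian estimates $\int e^{-ck\abs{y}^2}\abs{y}^4\,dy=O(k^{-n-2})$ and $\int e^{-ck\abs{y}^2}\,dy=O(k^{-n})$ to conclude \eqref{e-gue220915ycda}. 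You instead push Cauchy--Schwarz all the way to the extremal characterization, reducing the bound to the diagonal mixed derivative $L_{t,x}\ol L_{t,y}\hat P^{(q)}_{k,s,J_0,J_0}(x,y)\big|_{x=y=p}$ and killing the would-be $O(k^{n+1})$ term there; your computation $\pr_{z_t}\pr_{\ol w_t}\Psi(0,0)=-i(\abs{\lambda_t}+\lambda_t)$ is the same algebra, evaluated on the diagonal rather than off it. Your route avoids the Gaussian integrals, at the price of two verifications you should make explicit: (i) the identity $\sum_I\int_M\abs{L_{t,x}\hat P^{(q)}_{k,s,J_0,I}(p,y)}^2\,\mathrm{dV}_M(y)=L_{t,x}\ol L_{t,y}\hat P^{(q)}_{k,s,J_0,J_0}(x,y)\big|_{x=y=p}$ rests on the global relations $P_k^{(q)}=(P_k^{(q)})^2=(P_k^{(q)})^*$, so you must pass from $\int_D$ to $\int_M$ (handled by \eqref{e-gue220914ycd}, as you note) and the componentwise Parseval step needs the frame $\{\ol e^I\}$ to be pointwise orthonormal -- with the normalization \eqref{e-gue220915yydI} it is only orthonormal to first order at $p$, but the $O(\abs{y})$ Gram correction integrates to lower order against the Gaussian concentration, or one can simply orthonormalize the frame; (ii) the mixed derivative of the $O(k^{-\infty})$ remainder is again $O(k^{-\infty})$, which is guaranteed by the paper's convention that $O(k^{-\infty})$ controls all $\mathcal{C}^m$ norms. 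Both points are routine, so the proof goes through.
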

	
	\begin{proof}
		Put $Z_t=L_t$, $t=1,\ldots,n_-$, $Z_t=\ol L_t$, $t=n_-+1,\ldots,n$. Fix $t\in\{1,\ldots,n\}$. 
		From Theorem~\ref{t-gue220914yyd} and off-diagonal expansion of $P^{(q)}_k$ (see \eqref{e-gue220914ycd}), we have 
		\begin{equation}\label{e-gue220915ycd}
			\abs{(Z_t\tilde u_{J_0})(p)-\int e^{ik\Psi(0,y)-k\phi(y)}\Bigr(r_0(y)k^{n+1}+r_1(y,k)\Bigr)\chi(y)\tilde u(y)\mathrm{dV}_M(y)}\leq C_Nk^{-N},
		\end{equation}
		for every $N>0$, where $C_N>0$ is a constant independent of $k$, $u$ and $r_0(y)=O(\abs{y}^2)$, 
		$r_1(y,k)\in S^n_{{\rm cl\,}}(1;D,\mathbb C\boxtimes(T^{*0,q}M)^*)$, $\chi\in\mathcal{C}^\infty_c(D)$, $\chi=1$ near $0$. From ${\rm Im\,}\Psi(0,y)\sim\abs{y}^2$ and some straightforward computations, we can check that 
		\begin{equation}\label{e-gue220915ycdI}
			\begin{split}
				&\abs{\int e^{ik\Psi(0,y)-k\phi(y)}\Bigr(r_0(y)k^{n+1}+r_1(y,k)\Bigr)\chi(y)\tilde u(y)\mathrm{dV}_M(y)}\\
				&\leq C\abs{\int e^{-ck\abs{y}^2}(k^{2n+2}\abs{y}^4+k^{2n})dy}^{\frac{1}{2}}\leq\hat Ck^n,
			\end{split}
		\end{equation}
		where $C>0$, $c>0$, $\hat C>0$ are constants independent of $k$ and $u$. 
		From \eqref{e-gue220915ycd} and \eqref{e-gue220915ycdI}, we get \eqref{e-gue220915ycda}.
	\end{proof}
	
	We can now prove Theorem~\ref{t-gue220913yyd}. 
	We may assume that $\sum^{d_k}_{j=0}\abs{(Z_t\tilde S_{j,J_0})(p)}^2\neq0$. 
	Let 
	\[u:=\frac{\sum^{d_k}_{j=0}S_j\ol{(Z_t\tilde S_{j,J_0})(p)}}{\sqrt{\sum^{d_k}_{j=0}\abs{(Z_t\tilde S_{j,J_0})(p)}^2}}.\]
	Then, $u\in H^q(M,L^{\otimes k})$ with $\norm{u}_k=1$. From Lemma~\ref{l-gue220915yyd},
	we get 
	\[\abs{(Z_t\tilde u_{J_0})(p)}^2=\sum^{d_k}_{j=0}\abs{(Z_t\tilde S_{j,J_0})(p)}^2\leq Ck^{n}.\]
	The theorem follows. 
	
	\bigskip
	
	\textbf{Acknowledgements:} This project was started during the first author’s postdoctoral fellowship at the National Center for Theoretical Sciences in Taiwan; we thank the Center for the support. Chin-Yu Hsiao was partially supported by Taiwan Ministry of Science and Technology projects  108-2115-M-001-012-MY5, 109-2923-M-001-010-MY4.

\end{document}